\newtheorem{theorem}{Theorem}
\newtheorem{lemma}{Lemma}
\newtheorem{corollary}{Corollary}
\newcommand{\beq}{\begin{equation*}}
\newcommand{\eeq}{\end{equation*}}
\newcommand{\beqn}{\begin{equation}}
\newcommand{\eeqn}{\end{equation}}
\newcommand{\Po}{\mathcal{P}_{n,\;\!r}}
\newcommand{\M}{\mathcal{S}(g,\:\!s)}
\newcommand{\RR}{\mathbb R}
\newcommand{\NN}{\mathbb N}
\newcommand{\dd}{\mathrm{d}}
\newcommand{\n}{\natural}
\begin{document}

\title{Random chords and point distances\\ in regular polygons}
\author{Uwe B\"asel}
\date{}
\maketitle

\begin{abstract}
\noindent In this paper we obtain the chord length distribution function for any regular polygon. From this function we conclude the density function and the distribution function of the distance between two uniformly and independently distributed random points in the regular polygon. The method to calculate the chord length distribution function is quite different from those of Harutyunyan and Ohanyan, uses only elementary methods and provides the result with only a few natural case distinctions.\\[0.2cm]
\textbf{2010 Mathematics Subject Classification:} 60D05, 52A22\\[0.2cm]
\textbf{Keywords:} Geometric probability, random sets, integral geometry, chord length distribution function, random distances, distance distribution function, regular polygons, Piefke formula
\end{abstract}

\section{Introduction}

A random line $g$ intersecting a convex set $\mathcal{K}$ in the plane produces a chord of $\mathcal{K}$. The length $s$ of this chord is a random variable. If the motion invariant line measure (see below) is used for the definition of the line, the expectation of the chord length is equal to $\pi A/u$ where $A$ is the area of $\mathcal{K}$ and $u$ the length of its perimeter \cite[p.\ 30]{Santalo}. The chord length distribution function of a regular triangle was calculated by Sulanke \cite[p.\ 57]{Sulanke}. Harutyunyan and Ohanyan \cite{HO} calculated the chord length distribution function for regular polygons using Dirac's $\delta$-function in Pleijel's identity. Bertrand's paradox associated with the chord length distribution of a circle is well known \cite[pp.~116-118]{Czuber}, \cite[pp.\ 172-179]{Mathai}.  

The distance $t$ between two points chosen independently and uniformly at random from $\mathcal{K}$ is also a random variable. Borel \cite{Borel} considered this distance in elementary geometric figures such as triangles, squares and so on (see \cite[p.\ 163]{MMP}). The expectations for the distance between two random points for an equilateral triangle and a rectangle are to be found in \cite[p.~49]{Santalo}. Ghosh \cite{Ghosh} derived the distance distribution for a rectangle. There are a lot of results concerning the distance $t$  within a convex set or in two convex sets, see Chapter 2 in \cite{Mathai}.

The moments of $s$ and $t$ resp. are closely connected by a simple formula \cite[pp.\ 46/47]{Santalo}. The second moments of the chord length for regular polygons have been obtained by Heinrich \cite{Heinrich}.

For practical applications of chord lenghts and point distances of convex sets in physics, material sciences, operations research and other fields see \cite{Gille} and \cite{Marsaglia}.

The first aim of the present paper is to derive the chord length distribution function for any regular polygon in a simple form with only a few natural case distinctions using a method that requires only elementary geometric considerations and elementary integrations (especially not using Dirac's $\delta$-function in Pleijels identity as done in \cite{HO}). Our method is also suitable for irregular and even (with slight modifications) non-convex polygons as shown in \cite{Baesel_Duma}. The second aim is to conclude the density function and the distribution function of the distance between two random points in every regular polygon. This result is new to the author's knowledge.  

\begin{figure}[h]
  \vspace{0.2cm}
  \begin{center}
    \includegraphics[scale=1]{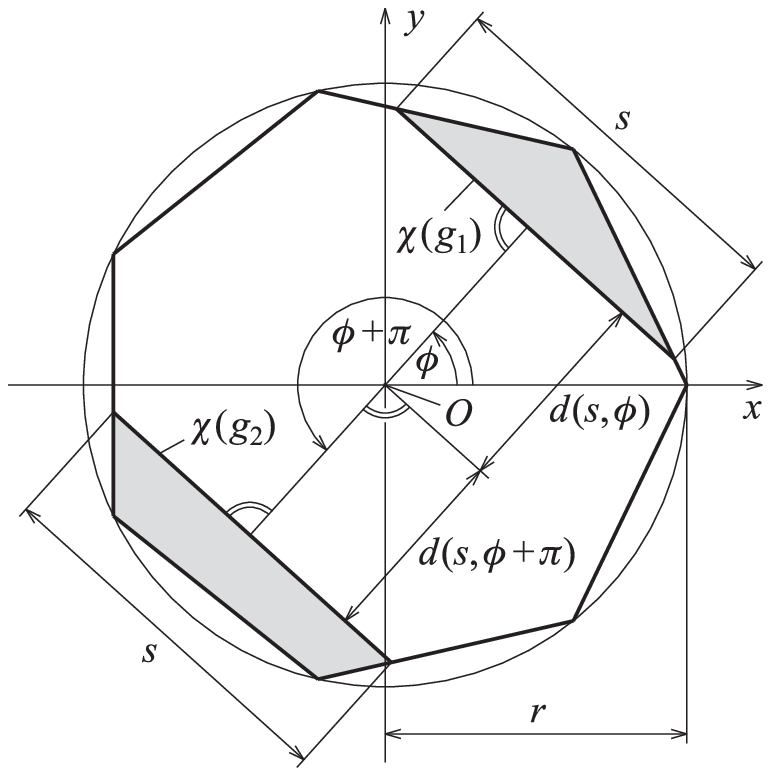}
  \end{center}
  \vspace{-0.2cm}
  \caption{\label{Fig_1} The polygon $\Po$ (example $n=7$)}
\end{figure}
We denote by $\Po$ the regular polygon with $n$ sides and circumscribed circle with radius $r$ and centre point in the origin $O$ (see Fig.\ \ref{Fig_1}). A straight line $g$ in the plane is determined by the angle $\phi$, $0\leq\phi<2\pi$, that the direction perpendicular to $g$ makes with a fixed direction (e.g. the $x$-axis) and by its distance $p$, $0\leq p<\infty$, from the origin $O$:
\[ g = g(p,\phi) = \{(x,y)\in\RR^2:\, x\cos\phi+y\sin\phi = p\}\,. \]
The measure $\mu$ of a set of lines $g(p,\phi)$ is defined by the integral, over the set, of the differential form $\dd g=\dd p\:\dd\phi$. Up to a constant factor, this measure is the only one that is invariant under motions in the Euclidean plan \cite[p.~28]{Santalo}.

The chord length distribution function of $\Po$ is usuallay defined as
\beq
  F(s) = \frac{1}{u}\:\mu(\{g:\,g\cap\Po\not=\emptyset,\,|\chi(g)|\leq s\})\,,
\eeq
where $\chi(g)=g\cap\Po$ is the chord of $\Po$, produced by the line $g$, $|\chi(g)|$ the length of $\chi(g)$, and $u$ the length of the perimeter of $\Po$. (The measure of all lines $g$ that intersect a convex set is equal to its perimeter \cite[p. 30]{Santalo}.) We use the distribution function in the form
\beqn \label{F}
  F(s) = 1-\frac{1}{u}\:\mu(\{g:\,g\cap\Po\not=\emptyset,\,|\chi(g)|>s\})
\eeqn
(cf.\ \cite[p.\ 161]{AMS}). So it remains to calculate the measure of all lines that produce a chord of length $|\chi(g)|>s$\,. Using the abbreviation
\beq
  \M:=\{g:\,g\cap\Po\not=\emptyset,\,|\chi(g)|>s\}\,,
\eeq
we have
\beq
 \mu(\M) = \int_{\M}\,\dd g = \int_{\M}\,\dd p\:\dd\phi\,.
\eeq
We consider all lines $g$, having a direction perpendicular to a fixed angle $\phi\in[0,\pi)$ with $g\cap\Po\not=\emptyset$. Among these lines there are in almost all cases two lines $g_1$ and $g_2$ with chords of equal length $s$ (see Fig.\ \ref{Fig_1}). All parallel lines $g$ lying in the strip between $g_1$ and $g_2$ have a chord with length $|\chi(g)|>s$. The breadth of this strip is equal to $d(s,\phi)+d(s,\phi+\pi)$, where $d(s,\phi)$ and $d(s,\phi+\pi)$ are the distances between $O$ and $g_1$ and $O$ and $g_2$ repectively. So we have
\beqn \label{Mass}
  \mu(\M) = \int_0^{\pi}\,\big[d(s,\phi)+d(s,\phi+\pi)\big]\,\dd\phi\,.
\eeqn

\section{The distance function}

In the following we determine the distance function in formula \eqref{Mass}
\beqn \label{d}
  d: \; [0,\max(s)]\times[0,\infty)\;\rightarrow\;[0,r] \,,\;\;\; (s,\phi)\;\mapsto\; d(s,\phi)\,,
\eeqn
where $\max(s)$ is the maximum chord length $s$ in $\Po$. If no chord of length $s$ in the direction perpendicular to $\phi$ exists, we put $d(s,\phi)=0$. Of course for fixed value of $s$, $d(s,\,\cdot\,)$ is a $2\pi/n$-periodic function. 

We put
\beq
  K = \bigg\lfloor\frac{n-2}{2}\bigg\rfloor\,,
\eeq
where $\lfloor\cdot\rfloor$ is the integer part of $\,\cdot\,$, and define the function
$m\!:\,\NN\times\NN\rightarrow\NN$ by
\beq
  m(k,n) = \left\{
  \begin{array}{lcl}
	k\;\mbox{mod}\;n & \mbox{if} & k\;\mbox{mod}\;n\not=0\,,\\[0.05cm]
	n & \mbox{if} & k\;\mbox{mod}\;n = 0\,.
  \end{array}\right. 
\eeq
\begin{figure}[h]
  \begin{center}
    \includegraphics[scale=0.9]{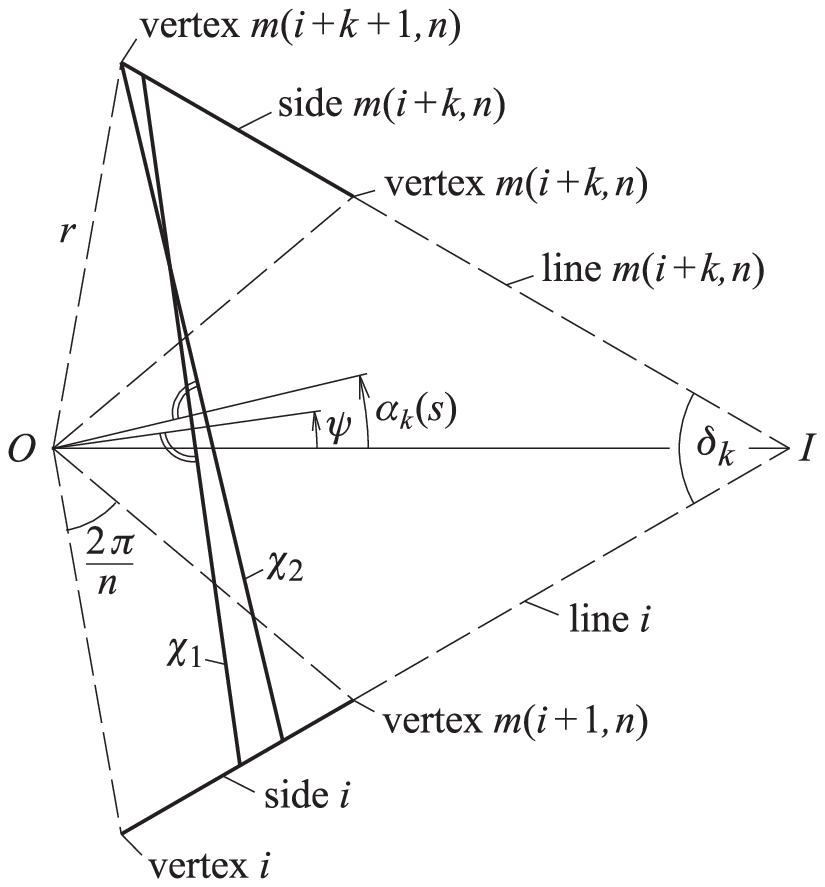}
  \end{center}
  \vspace{-0.3cm}
  \caption{\label{Fig_2} Chords $\chi$ between side $i$ and side $m(i+k,n)$}
\end{figure}

\noindent
The angle $\delta_k$ $\mbox{(see Fig. \ref{Fig_2})}$ between the lines $i$ and $m(i+k,n)$ containing the sides $i$, $i=1,\ldots,n$, and $m(i+k,n)$ of $\Po$ is given by
\beq
  \delta_k = \bigg(1-\frac{2k}{n}\bigg)\,\pi \;,\quad k=1,\ldots,K^*\,,
\eeq
where
\beq
  K^* = \left\{
  \begin{array}{ll}
	K+1 & \mbox{if $n$ is odd}\,,\\[0.05cm]
	K & \mbox{if $n$ is even}\,.  
  \end{array}\right.
\eeq
The distance $\ell_k$ between the vertices $i$ and $m(i+k,\,n)$ is for $k=0,\ldots,K+1$ given by  
\beq
  \ell_k = 2r\sin\frac{k\pi}{n}\,.
\eeq
The maximum chord length in $\Po$ is equal to $\ell_{K+1}$. For the distance $x$ between one point of side $i$ and one point of side $m(i+k,n)$, $k=1,\ldots,K^*$, we have $\ell_{k-1}\leq x\leq\ell_{k+1}$, and $\ell_k\leq x\leq\ell_{k+2}$ for the analogous distance of the sides $i$ and $m(i+k+1,n)$. Therefore, a chord of length $s$, $\ell_k\leq s\leq\ell_{k+1}$, is a chord between two sides $i$ and $m(i+k,n)$ or two sides $i$ and $m(i+k+1,n)$. 
 
In the first step we derive formulas for the distance $d_k^*(s,\psi)$ between $O$ and a chord $\chi$ of length $s$, $\ell_k\leq s\leq\ell_{k+1}$, $k=0,\ldots,K$, where $\psi$ denotes the oriented angle between the segment from $O$ to the intersection point $I$ of the lines $i$ and $m(i+k,n)$ and the line perpendicular to $\chi$ ($\mbox{Fig. \ref{Fig_2})}$. We only consider the interval $0\leq\psi\leq\pi/n$. It is necessary to distinguish the following cases:\\[0.25cm]
\underline{Case 1}
\beqn \label{case1}
  \ell_k\leq s<\ell_{k+1} \;\mbox{{\em with}}\:\left\{\!
  \begin{array}{l@{\;\in\;}l}
	k & \{1,\ldots,K-1\} \:\,\mbox{{\em if $n$ is even}}\,,\\[0.1cm]
	k & \{1,\ldots,K\} \:\,\mbox{{\em if $n$ is odd and $s\leq 2r\cos^2\dfrac{\pi}{2n}$}}
  \end{array}\!\right\}
\eeqn
For $0\leq\psi\leq\alpha_k(s)$ (e.g. for position $\chi_1$ of $\chi$ in $\mbox{Fig. \ref{Fig_2})}$ the distance $d_k^*(s,\psi)$ between $O$ and $\chi$ is equal to 
\beqn \label{q}
  q_k(s,\psi) := r\cos\dfrac{\pi}{n}\sec\dfrac{k\pi}{n}\cos\psi
	-\dfrac{s}{2}\,\bigg(\tan\dfrac{k\pi}{n}\cos^2\psi-\cot\dfrac{k\pi}{n}\sin^2\psi\bigg)
\eeqn
 The angle $\alpha_k$ is determined by the position $\chi_2$ of $\chi$ with the upper end-point in the vertex $m(i+k+1,n)$:
\beqn \label{alpha}
  \alpha_k(s) = \arcsin\bigg(\dfrac{2r}{s}\sin\dfrac{k\pi}{n}\sin\dfrac{(k+1)\pi}{n}\bigg)-\dfrac{k\pi}{n}\,.
\eeqn
For $\alpha_k(s)\leq\psi\leq\pi/n$, $\chi$ is a chord between the sides $i$ and $m(i+k+1,n)$. So we find
\beqn \label{c1}
  d_k^*(s,\psi) = \left\{
  \begin{array}{lcl}
	q_k(s,\psi) & \mbox{if} & 0\leq\psi\leq\alpha_k(s)\;,\\[0.15cm]
	q_{k+1}(s,\psi-\pi/n) & \mbox{if} & \alpha_k(s)<\psi\leq\pi/n\;.
  \end{array}\right\}
\eeqn
\noindent
\underline{Case 1a}
\beqn \label{case1a}
  0=\ell_0\leq s<\ell_1 \quad\mbox{{\em and}}\quad s<2r\cos^2\frac{\pi}{2n}
\eeqn
We have $\alpha_0(s)=0$ if $s\not=0$, and the limit of $\alpha_0(s)$ at $s=0$ is 0. Therefore we get
\beqn \label{c1a}
  d_0^*(s,\psi)=q_1(s,\psi-\pi/n) \quad\mbox{for}\quad 0\leq\psi\leq\pi/n
\eeqn 
as special case of case 1 with $\alpha_0(s)=0$ in formula \eqref{c1}.\\[0.25cm]
\underline{Case 2}
\beqn \label{case2}
  \mbox{{\em $n$ is even and $\ell_K\leq s\leq\ell_{K+1}$}}
\eeqn  
A chord $\chi$ in the direction perpendicular to $\psi$ does not exist if $\alpha_K(s)<\psi\leq\pi/n$, therefore  
\beqn \label{c2} 
  d_K^*(s,\psi) = \left\{
  \begin{array}{lcl}
	q_K(s,\psi) & \mbox{if} & 0\leq\psi\leq\alpha_K(s)\;,\\[0.15cm]
	0 & \mbox{if} & \alpha_K(s)<\psi\leq\pi/n\;. 
  \end{array}\right\}
\eeqn
\underline{Case 3}
\beqn \label{case3}
  \mbox{{\em $n$ is odd and $\ell_K\leq s\leq\ell_{K+1}$ and $s\geq 2r\cos^2\dfrac{\pi}{2n}$}}
\eeqn
A chord $\chi$ in the direction perpendicular to $\psi$ does not exist if
\beq
  \beta(s)<\psi<\frac{\pi}{n}-\beta(s)\,,
\eeq
with
\beqn \label{beta}
  \beta(s) = \dfrac{\pi}{2n}-\arccos\bigg(\dfrac{2r}{s}\cos^2\dfrac{\pi}{2n}\bigg)\,; 
\eeqn
therefore
\beqn \label{c3}
  d_K^*(s,\psi) = \left\{
  \begin{array}{lcl}
	q_K(s,\psi) & \mbox{if} & 0\leq\psi<\alpha_K(s)\;,\\[0.15cm]
	q_{K+1}(s,\psi-\pi/n) & \mbox{if} & \alpha_K(s)\leq\psi\leq\beta(s)\;,\\[0.15cm]
	0 & \mbox{if} &\beta(s)<\psi<\pi/n-\beta(s)\;,\\[0.15cm]
	q_{K+1}(s,\psi-\pi/n) & \mbox{if} & \pi/n-\beta(s)\leq\psi\leq\pi/n\;. 
  \end{array}\right\}
\eeqn
Due to the symmetry of the graph of $d_k^*(s,\psi)$ with respect to the line $\psi=\pi/n$, the values in the interval $\pi/n<\psi\leq 2\pi/n$ can be easily calculated from \eqref{c1}, \eqref{c1a}, \eqref{c2} and \eqref{c3} with
\beqn \label{symmetry}
  d_k^*(s,\psi) = d_k^*(s,2\pi/n-\psi)\,.
\eeqn
Since $d_k^*(s,\psi)$ is a $2\pi/n$-periodic function, we get the values for $2\pi/n<\psi<\infty$ with the translation
\beq
  \psi\;\mapsto\;\psi-\delta(\psi) \quad\mbox{with}\quad
  \delta(\psi)=\bigg\lfloor\frac{n\psi}{2\pi}\bigg\rfloor\frac{2\pi}{n}\,.
\eeq
 
In the case of even $n$, the substitution $\psi=\phi+\pi/n$ yields the distances for angle $\phi$ starting from a vertex as shown in Fig.\ \ref{Fig_1}. In the case of odd $n$ we have $\psi=\phi$. So we have found:  
\begin{lemma}
The restrictions $d_k(s,\phi)=d(s,\phi)|_{\ell_k\,\leq\,s\,<\,\ell_{k+1}}$ of the distance function $d$ are given by
\beq
  d_k(s,\phi) = \left\{
  \begin{array}{ll}
	d_k^*\big(s,\phi-\delta(\phi)\big) & \mbox{if $n$ is odd}\,,\\[0.2cm]
	d_k^*\bigg(s,\phi+\dfrac{\pi}{n}-\delta\Big(\phi+\dfrac{\pi}{n}\Big)\bigg) & \mbox{if $n$ is even}\,,
  \end{array}\right.
\eeq
for $k=0,\ldots,K$, where
\beq
  \delta(\,\cdot\,)=\bigg\lfloor\frac{n\:\cdot\,}{2\pi}\bigg\rfloor\frac{2\pi}{n}
\eeq
and
$d_k^*$ according to the formulas $\eqref{c1}$, $\eqref{c2}$, $\eqref{c3}$ and $\eqref{symmetry}$
with $\alpha_k$ and $\beta$ according to \eqref{alpha} and \eqref{beta} respectively.
\end{lemma}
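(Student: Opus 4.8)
The lemma summarises the case analysis carried out above into a single closed formula, so the plan is to check that the pieces fit together and exhaust the domain of $d$, not to prove anything genuinely new. \emph{First}, I would record that for each $k\in\{0,\ldots,K\}$ and each $s$ with $\ell_k\le s<\ell_{k+1}$ the value $d_k^*(s,\psi)$ on the fundamental sector $0\le\psi\le\pi/n$ is completely determined: Case 1 (with Case 1a its $k=0$ instance) covers $n$ even with $k\le K-1$, and $n$ odd with $k\le K$ and $s\le 2r\cos^2\frac{\pi}{2n}$; Case 2 covers $n$ even with $k=K$; Case 3 covers $n$ odd with $k=K$. One checks that these alternatives are mutually exclusive and jointly exhaustive in $(s,k)$, and that, adding the endpoint $s=\max(s)=\ell_{K+1}$ (for which \eqref{c2} and \eqref{c3} are stated on the closed interval $[\ell_K,\ell_{K+1}]$), the decomposition $[0,\max(s)]=\bigcup_{k=0}^{K}[\ell_k,\ell_{k+1}]$ accounts for the whole $s$-range of the domain in \eqref{d}.

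\emph{Second}, I would extend $d_k^*(s,\cdot)$ from $[0,\pi/n]$ to all of $[0,\infty)$. The mirror symmetry of $\Po$ in the line $OI$ yields the reflection identity \eqref{symmetry} about $\psi=\pi/n$, which carries the formulas to $[0,2\pi/n]$; since $d_k^*(s,\cdot)$ is $2\pi/n$-periodic, an arbitrary argument is then brought into $[0,2\pi/n)$ by subtracting the right multiple of $2\pi/n$, and that multiple is precisely $\delta(\psi)=\lfloor n\psi/(2\pi)\rfloor\,(2\pi/n)$, the largest integer multiple of $2\pi/n$ not exceeding $\psi$.

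\emph{Third}, I would pass from the coordinate $\psi$ --- the oriented angle to the chord's perpendicular measured from the segment $OI$ --- to the coordinate $\phi$, the same angle measured from the $x$-axis with $\Po$ positioned as in Fig.\ \ref{Fig_1} (a vertex on the axis). As noted in the paragraph preceding the lemma, matching these reference directions amounts to taking $\psi=\phi$ when $n$ is odd and $\psi=\phi+\pi/n$ when $n$ is even, the extra $\pi/n$ coming from the fact that for even $n$ the natural zero of $\psi$ points towards the midpoint of a side rather than towards a vertex. Inserting $\psi=\phi$, resp.\ $\psi=\phi+\pi/n$, into the reduction of the second step and then into the sector formulas of the first gives exactly the two displayed expressions for $d_k(s,\phi)=d(s,\phi)|_{\ell_k\le s<\ell_{k+1}}$. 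The one point requiring real care --- hence the one I would dwell on --- is this last identification: confirming that the change from $\phi$- to $\psi$-coordinates is a pure rotation, so that orientations (and with them the sign conventions in \eqref{alpha}, \eqref{beta} and at the case boundaries) are preserved, and that the offset is $0$ for odd $n$ and $\pi/n$ for even $n$ uniformly in $k$; everything else is transcription.
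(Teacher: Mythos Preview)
Your proposal is correct and mirrors the paper's own argument: the lemma is stated immediately after the case analysis with ``So we have found:'', and your three steps (exhaustive case check on $[0,\pi/n]$, extension by the reflection \eqref{symmetry} and the $2\pi/n$-periodic shift $\delta$, then the coordinate change $\psi=\phi$ or $\psi=\phi+\pi/n$) are exactly the structure of Section~2. The only minor slip is in your summary of Case~3, which should read ``$n$ odd with $k=K$ \emph{and} $s\ge 2r\cos^2\frac{\pi}{2n}$'' (for $n=3$ one has $K=0$ and $\lambda<\ell_1$, so Case~1a and Case~3 split the single interval $[\ell_0,\ell_1)$); you implicitly acknowledge this when you say the cases are mutually exclusive and exhaustive, but it is worth stating explicitly.
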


\section{Chord length distribution function}

So we can write the chord length distribution function \eqref{F} in the form
\beq
  F(s) = \left\{
  \begin{array}{lcl}
	0 & \mbox{if} & -\infty<s<\ell_0=0\,,\\[0.15cm]
	H_k(s) & \mbox{if} & \ell_k\leq s<\ell_{k+1} \;\;\mbox{for}\;\; k=0,\ldots,K,\\[0.15cm]
	1 & \mbox{if} & \ell_{K+1}\leq s<\infty\,,
  \end{array}\right. 
\eeq
where
\[ H_k(s) = 1-\frac{\mu_k(s)}{2nr\sin(\pi/n)} \;\;\mbox{with}\;\;
	\mu_k(s):=\int_0^{\pi}\big[d_k(s,\phi)+d_k(s,\phi+\pi)\big]\,\dd\phi\,. \]
With $(\star)$ the $2\pi/n$-periodicity of $d_k(s,\phi)$ and  $(\diamond)$ the symmetry of $d_k(s,\phi)$ with respect to the line $\phi=\pi/n$,
we find for odd and even $n$  
\begin{align*}
  \mu_k(s)
	= {} & \int_0^{\pi}\big[d_k(s,\phi)+d_k(s,\phi+\pi)\big]\,\dd\phi\\
	\overset{\star}{=} {} & \frac{n}{2}\int_0^{2\pi/n}\big[d_k(s,\phi)+d_k(s,\phi+\pi)\big]\,\dd\phi\\
	= {} & \frac{n}{2}\int_0^{2\pi/n}\bigg[d_k(s,\phi)+d_k\bigg(s,\phi+n\,\frac{2\pi}{n}\bigg)\bigg]\,\dd\phi\\
	\overset{\star}{=} {} & n\int_0^{2\pi/n}d_k(s,\phi)\,\dd\phi
	\overset{\diamond}{=} 2n\int_0^{\pi/n}d_k(s,\phi)\,\dd\phi\,.
\end{align*}
Note that this integral formula (together with the piecewise definition of the distance function) allows us to calculate the distribution function of every regular polygon in a rather simple way.
  
The indefinite integral of the function $q_k$ (see \eqref{q}) is given by
\begin{align} \label{J_k}
  J_k(s,\phi)
	= {} & \int q_k(s,\phi)\,\dd\phi = r\cos\dfrac{\pi}{n}\sec\dfrac{k\pi}{n}\sin\phi\nonumber\\
	  {} & -\dfrac{s}{8}\bigg[(2\phi+\sin 2\phi)\tan\dfrac{k\pi}{n}
	  		-(2\phi-\sin 2\phi)\cot\dfrac{k\pi}{n}\bigg].
\end{align}
In case 1 (see \eqref{case1}) we get with $\alpha_k=\alpha_k(s)$
\begin{align*}
  \frac{\mu_k(s)}{2n}
	= {} & \int_0^{\alpha_k}d_k(s,\phi)\,\dd\phi+\int_{\alpha_k}^{\pi/n}d_k(s,\phi)\,\dd\phi\\
	= {} & \int_0^{\alpha_k}q_k(s,\phi)\,\dd\phi+\int_{\alpha_k}^{\pi/n}q_{k+1}\bigg(s,\phi-\frac{\pi}{n}\bigg)\,\dd\phi
\end{align*}
and with the substitution $\phi^*=\phi-\pi/n$
\begin{align*} \label{F_k}
  \frac{\mu_k(s)}{2n}
	= {} & \int_0^{\alpha_k}q_k(s,\phi)\,\dd\phi+\int_{\alpha_k-\pi/n}^{0}q_{k+1}(s,\phi^*)\,\dd\phi^*\displaybreak[0]\\
	= {} & J_k(s,\alpha_k)-J_k(s,0)+J_{k+1}(s,0)-J_{k+1}\bigg(s,\alpha_k-\frac{\pi}{n}\bigg)\\
	= {} & J_k(s,\alpha_k)-J_{k+1}\bigg(s,\alpha_k-\frac{\pi}{n}\bigg)\,,
\end{align*}
since $J_k(s,0)=0$. In case 1a (see \eqref{case1a}) one finds with $\alpha_0(s)=0$ and $J_k(s,-\phi)=-J_k(s,\phi)$
\beq
  \frac{\mu_0(s)}{2n} = -J_1\bigg(s,-\frac{\pi}{n}\bigg) = J_1\bigg(s,\frac{\pi}{n}\bigg)\,.
\eeq
Putting $J_0(s,0)=0$, this formula can be considered as special case of the formula for case 1. In case~2 (see \eqref{case2}) one easily finds
\beq
  \frac{\mu_K(s)}{2n} = J_K(s,\alpha_K)
\eeq
and in case 3 (see \eqref{case3}) with $\beta=\beta(s)$
\begin{align*}
   \frac{\mu_K(s)}{2n}
 	= {} & \bigg[\int_0^{\alpha_K}+\int_{\alpha_K}^{\beta}+\int_{\beta}^{\pi/n-\beta}
 			+\int_{\pi/n-\beta}^{\pi/n}\bigg]\;d_K(s,\phi)\,\dd\phi\\
 	= {} & \bigg[\int_0^{\alpha_K}+\int_{\alpha_K}^{\beta}
 			+\int_{\pi/n-\beta}^{\pi/n}\bigg]\;d_K(s,\phi)\,\dd\phi\\
 	= {} & \int_0^{\alpha_K}q_K(s,\phi)\,\dd\phi+\bigg[\int_{\alpha_K}^{\beta}
			+\int_{\pi/n-\beta}^{\pi/n}\bigg]\;q_{K+1}(s,\phi-\pi/n)\,\dd\phi\\
 	= {} & \int_0^{\alpha_K}q_K(s,\phi)\,\dd\phi+\bigg[\int_{\alpha_K-\pi/n}^{\beta-\pi/n}
			+\int_{-\beta}^{0}\bigg]\;q_{K+1}(s,\phi^*)\,\dd\phi^*\\
 	= {} & J_K(s,\alpha_K)-J_{K+1}\bigg(s,\alpha_K-\frac{\pi}{n}\bigg)
 			+J_{K+1}\bigg(s,\beta-\frac{\pi}{n}\bigg)+J_{K+1}(s,\beta)\,.
\end{align*}
The function $J_k$ (see \eqref{J_k}) can be written as
\begin{align*}
  J_k(s,\phi)
	= {} & r\cos\dfrac{\pi}{n}\sec\dfrac{k\pi}{n}\sin\phi+\dfrac{s}{4}\left(2\phi\cot\dfrac{2k\pi}{n}
	  		-\sin(2\phi)\csc\dfrac{2k\pi}{n}\right).
\end{align*}
Furthermore, we write both the functions $\alpha_k$ (see \eqref{alpha}) and $\beta$ (see \eqref{beta}) in the form
\begin{align*}
  \arcsin\frac{a}{s}-b
\end{align*}
with
\beqn \label{A1B1} 
  a = A_1(k) = 2r\sin\dfrac{k\pi}{n}\sin\dfrac{(k+1)\pi}{n} \;,\quad
  b = B_1(k) = \dfrac{k\pi}{n}
\eeqn
for $\alpha_k$, and
\beqn \label{A2B2}
  a = A_2 = 2r\cos^2\dfrac{\pi}{2n}  \;,\quad
  b = B_2 = \dfrac{\pi}{2}\left(1-\dfrac{1}{n}\right)   
\eeqn
for $\beta$. Using some easy algebraic manipulations, one finds
\begin{align*}
  \frac{1}{r}\:J_k\left(s,\,\arcsin\frac{a}{s}-b\right)
    = {} & \Theta_1(k,a,b)\,s + \Theta_2(k,a,b)\,\frac{1}{s} + \Theta_3(k,a,b)\,\frac{\sqrt{s^2-a^2}}{s}\\
         & + \Theta_4(k,a,b)\,s\arcsin\frac{a}{s} =: h_k(s,a,b)\,, 
\end{align*}
where
\beqn \label{Theta}
 \left.\begin{array}{l@{\;=\;}l}
  \Theta_1(k,a,b) & \dfrac{1}{4r}\csc\dfrac{\pi}{n}\left(\sin(2b)\csc\dfrac{2k\pi}{n}
							-2b\cot\dfrac{2k\pi}{n}\right),\\[0.4cm]
  \Theta_2(k,a,b) & a\left(\cos b\cot\dfrac{\pi}{n}\sec\dfrac{k\pi}{n}
							-\dfrac{a}{2r}\sin(2b)\csc\dfrac{\pi}{n}\csc\dfrac{2k\pi}{n}\right),\\[0.4cm]
  \Theta_3(k,a,b) & -\!\left(\sin b\cot\dfrac{\pi}{n}\sec\dfrac{k\pi}{n}
							+\dfrac{a}{2r}\cos(2b)\csc\dfrac{\pi}{n}\csc\dfrac{2k\pi}{n}\right),\\[0.4cm]
  \Theta_4(k,a,b) & \dfrac{1}{2r}\csc\dfrac{\pi}{n}\cot\dfrac{2k\pi}{n}\,.							
 \end{array}\right\}
\eeqn
In summary, we have proved:
\begin{theorem} \label{Theorem}
The chord length distribution function $F$ of the regular polygon $\Po$ is given by
\beq
  F(s) = \left\{
  \begin{array}{lcl}
	0 & \mbox{if} & -\infty<s<\ell_0=0\,,\\[0.15cm]
	H_k(s) & \mbox{if} & \ell_k\leq s<\ell_{k+1} \;\;\mbox{for}\;\; k=0,\ldots,K,\\[0.15cm]
	1 & \mbox{if} & \ell_{K+1}\leq s<\infty\,,
  \end{array}\right. 
\eeq
where
\beq
  \ell_k = 2r\sin\frac{k\pi}{n} \;,\quad K = \bigg\lfloor\frac{n-2}{2}\bigg\rfloor
\eeq
and
\beq
  H_k(s) = \left\{
  \begin{array}{l}
	1-h_k(s,A_1(k),B_1(k))+h_{k+1}(s,A_1(k),B_1(k)+\pi/n)\\[0.15cm]
	\quad\mbox{if \, $(n$ is even $\,\wedge\,$ $k\in\{0,\ldots,K-1\})$ $\,\vee\,$
	$(n$ is odd $\,\wedge\,$ $s<\lambda)$}\,,\\[0.4cm]
	1-h_K(s,A_1(K),B_1(K))+h_{K+1}(s,A_1(K),B_1(K)+\pi/n)\\[0.15cm]
	-\:h_{K+1}(s,A_2,B_2+\pi/n)-h_{K+1}(s,A_2,B_2)\\[0.15cm]
	\quad\mbox{if \, $n$ is odd $\,\wedge\,$ $s\geq\lambda$}\,,\\[0.4cm]	
	1-h_K(s,A_1(K),B_1(K))\mbox{ \; if \; $n$ is even $\,\wedge\,$ $k=K$}
  \end{array}\right.
\eeq
with
\beq
  \lambda = 2r\cos^2\dfrac{\pi}{2n}\,,
\eeq
$A_1(k)$ and $B_1(k)$ according to \eqref{A1B1}, $A_2$ and $B_2$ according to \eqref{A2B2}, and 
\beq
  h_k(s,a,b) = \left\{\begin{array}{lcl}
	0 & \mbox{if} & k=0\,,\\[0.1cm]
	\sum_{i=1}^4\Theta_i(k,a,b)\,L_i(s,a) & \mbox{if} & k=1,2,\ldots,
  \end{array}\right.
\eeq
with $\Theta_i(k,a,b)$ according to \eqref{Theta},
and
\begin{align*}
  L_1(s,a) = {} & s \,,\;\; L_2(s,a) = \frac{1}{s} \,,\;\; L_3(s,a) = \frac{\sqrt{s^2-a^2}}{s} \,,\;\;
  L_4(s,a) = s\arcsin\frac{a}{s}\,.
\end{align*}
\end{theorem}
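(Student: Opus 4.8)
The statement collects the results of Sections~2 and~3, so the plan is mostly organisational. First I would note that the perimeter of $\Po$ is $u=2nr\sin(\pi/n)$ and substitute it into \eqref{F}, so that $F(s)=1-\mu(\M)/u$ with $\mu(\M)$ given by \eqref{Mass}. The two extreme ranges are immediate: for $s<\ell_0=0$ every chord is longer than $s$, hence $\mu(\M)=u$ and $F(s)=0$; for $s\ge\ell_{K+1}$ — which equals the diameter $2r$ for even $n$ and the longest diagonal for odd $n$, and in either case is the maximal chord length — no chord exceeds $s$, hence $\mu(\M)=0$ and $F(s)=1$. It remains to treat $\ell_k\le s<\ell_{k+1}$ for $k=0,\dots,K$, where $F(s)=H_k(s)=1-\mu_k(s)/u$ by definition.

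Next I would reproduce the reduction performed just before the theorem: using the $2\pi/n$-periodicity $(\star)$ and the reflection symmetry $(\diamond)$ of $d_k(s,\cdot)$ one gets $\mu_k(s)=2n\int_0^{\pi/n}d_k(s,\phi)\,\dd\phi$. On $[0,\pi/n)$ the shift $\delta$ in the Lemma vanishes, so $d_k=d_k^*$ outright for odd $n$, while for even $n$ one has $d_k(s,\phi)=d_k^*(s,\phi+\pi/n)$, which by the reflection \eqref{symmetry} again integrates to $\int_0^{\pi/n}d_k^*(s,\psi)\,\dd\psi$. Then I would insert the piecewise description of $d_k^*$ from \eqref{c1}, \eqref{c1a}, \eqref{c2} and \eqref{c3}: split $[0,\pi/n]$ at $\alpha_k(s)$ and, in case~3, additionally at $\beta(s)$ and $\pi/n-\beta(s)$; on each subinterval replace $d_k^*$ by the appropriate $q_j$; and integrate via the antiderivative $J_j$ of \eqref{J_k}. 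The crucial point is that all interior boundary contributions telescope, since $J_j(s,0)=0$ and $J_j(s,-\phi)=-J_j(s,\phi)$: after the substitution $\phi^*=\phi-\pi/n$ in the $q_{k+1}$-pieces, only the terms at $\alpha_k$ (and, in case~3, at $\beta$) survive, which reproduces the closed forms for $\mu_k(s)/(2n)$ stated for cases~1, 1a, 2 and~3.

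The one genuinely computational step is to put the values of $J_j$ at these angles into the claimed form. Both $\alpha_k(s)$ in \eqref{alpha} and $\beta(s)$ in \eqref{beta} are of the shape $\arcsin(a/s)-b$, with $(a,b)=(A_1(k),B_1(k))$ from \eqref{A1B1} for $\alpha_k$ and $(a,b)=(A_2,B_2)$ from \eqref{A2B2} for $\beta$ (after $\arccos x=\pi/2-\arcsin x$), while the shifted arguments $\alpha_k-\pi/n$, $\beta-\pi/n$ amount to keeping $a$ and replacing $b$ by $b+\pi/n$. Substituting $\phi=\arcsin(a/s)-b$ into $J_j$ and expanding $\sin\phi$ and $\sin2\phi$ by the addition and double-angle formulas — using $\cos(\arcsin(a/s))=\sqrt{s^2-a^2}/s$, $\sin(2\arcsin(a/s))=2a\sqrt{s^2-a^2}/s^2$ and $\cos(2\arcsin(a/s))=(s^2-2a^2)/s^2$ — collects the result into the monomials $L_i(s,a)$ with coefficients $r\sin(\pi/n)\,\Theta_i(k,a,b)$ as in \eqref{Theta}; dividing by $u=2nr\sin(\pi/n)$ turns each $J_j$-value into the corresponding $h_j$. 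It then only remains to match the four regimes of the case analysis ($n$ even with $k<K$; $n$ even with $k=K$; $n$ odd with $s<\lambda$; $n$ odd with $s\ge\lambda$, where the threshold $\lambda=2r\cos^2(\pi/2n)$ is precisely the value of $s$ separating cases~1 and~3) to the branches of the stated formula for $H_k(s)$, and to note that the singular index $k=0$ — for which $q_0$ and $J_0$ are undefined — never actually arises in the integrals, since case~1a already uses only $q_1$; it is therefore consistent to set $h_0\equiv0$. This last trigonometric bookkeeping, in particular keeping the signs and the identification of the pairs $(a,b)$ straight across all cases, is the main place where errors can creep in; conceptually the argument is complete once these identifications are made.
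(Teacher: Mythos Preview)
Your proposal is correct and follows essentially the same route as the paper: reduce $\mu_k(s)$ to $2n\int_0^{\pi/n}d_k^*(s,\psi)\,\dd\psi$ via periodicity and the reflection symmetry, insert the piecewise formulas \eqref{c1}--\eqref{c3} for $d_k^*$, integrate each piece via the antiderivative $J_j$ of \eqref{J_k} (the boundary terms at $0$ vanish because $J_j(s,0)=0$ and $J_j$ is odd in $\phi$), and finally substitute $\phi=\arcsin(a/s)-b$ into $J_j$ with the identifications \eqref{A1B1}, \eqref{A2B2} to obtain the $\Theta_i L_i$ decomposition. Your remark that for even $n$ the shift $\phi\mapsto\phi+\pi/n$ is undone by the reflection \eqref{symmetry} when integrating over $[0,\pi/n]$ is a point the paper leaves implicit, and your normalisation (dividing $J_j$ by $r\sin(\pi/n)$, i.e.\ by $u/(2n)$) is in fact the correct one, consistent with the $\csc(\pi/n)$ factors already built into the $\Theta_i$.
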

\vspace{0.2cm}
$F$ can be written in the form
\beq
  F(s) = H_0(s) = \bigg[\bigg(1-\frac{\pi}{n}\cot\frac{\pi}{n}\bigg)\csc\frac{\pi}{n}+\frac{\pi}{n}\sec\frac{\pi}{n}\bigg]\frac{s}{4r}
\eeq
for $0\leq s\leq\lambda$ if $n=3$, and $0\leq s\leq\ell_1$ if $n=$ 4, 5, \ldots. Note that this is a linear equation of $s$ (cf.\ \cite[pp. 866/867]{Gates}). 
 
From \cite[p.\ 55, Satz\ 2]{Sulanke} it follows that the chord length distribution function of  a regular polygon is a continuous function.

\section{Point distances}

In the following, we consider the distance between two uniformly and independently distributed random points within the polygon $\Po$ with perimeter~$u$ and area $A$:
\beq
  u = 2nr\sin\frac{\pi}{n} \,,\;\; A = \frac{1}{2}\,nr^2\sin\frac{2\pi}{n}\,.
\eeq
\begin{theorem} \label{Theorem2}
The density function $g$ of the distance $t$ between two random points in $\Po$ is given by
\beq
  g(t) = \left\{\begin{array}{lcl}
	\dfrac{2t}{A}\bigg[\pi+\dfrac{u}{A}\,\big(\phi^*(t)-t\big)\bigg] & \mbox{if} & t\in[0,\ell_{K+1})\,,\\[0.4cm]
	0 & \mbox{if} & t\in\mathbb{R}\setminus[0,\ell_{K+1})\,,
  \end{array}\right.
\eeq
where
\beq
  \phi^*(t) = \sum_{\nu=0}^{k-1}J_\nu^*(\ell_\nu,\ell_{\nu+1})+J_k^*(\ell_k,t)
	\quad\mbox{if}\quad \ell_k\leq t<\ell_{k+1},\; k=0,\ldots,K 
\eeq
with
\beq
  J_k^*(s,t) = H_k^*(t)-H_k^*(s)\,,
\eeq
where
\beq
  H_k^*(t) = \left\{
  \begin{array}{l}
	t-h_k^*(t,A_1(k),B_1(k))+h_{k+1}^*(t,A_1(k),B_1(k)+\pi/n)\\[0.15cm]
	\quad\mbox{if \, $(n$ is even $\,\wedge\,$ $k\in\{0,\ldots,K-1\})$ $\,\vee\,$
	$(n$ is odd $\,\wedge\,$ $t<\lambda)$}\,,\\[0.4cm]
	t-h_K^*(t,A_1(K),B_1(K))+h_{K+1}^*(t,A_1(K),B_1(K)+\pi/n)\\[0.13cm]
	-\:h_{K+1}^*(t,A_2,B_2+\pi/n)-h_{K+1}^*(t,A_2,B_2)\\[0.15cm]
	+\:h_{K+1}^*(\lambda,A_2,B_2+\pi/n)+h_{K+1}^*(\lambda,A_2,B_2)\\[0.15cm]
	\quad\mbox{if \, $n$ is odd $\,\wedge\,$ $t\geq\lambda$}\,,\\[0.4cm]	
	t-h_K^*(t,A_1(K),B_1(K))\mbox{ \; if \; $n$ is even $\,\wedge\,$ $k=K$}
  \end{array}\right.
\eeq
with
\beq
  h_k^*(t,a,b) = \left\{\begin{array}{l}
	0 \quad\mbox{if}\quad k=0 \,\vee\, (k\not=0\wedge t=0)\,,\\[0.1cm]
	\displaystyle{\sum_{i=1}^4\Theta_i(k,a,b)\,L_i^*(t,a)} \quad\mbox{if}\quad k\not=0\,\wedge\,t>0\,,
  \end{array}\right.
\eeq
\vspace{-0.4cm}
\begin{align*}
  L_1^*(t,a) = {} & \frac{t^2}{2} \;,\;\; L_2^*(t,a) = \ln t \;,\;\; 
  L_3^*(t,a) = \sqrt{t^2-a^2}+a\arcsin\frac{a}{t} \;,\\[0.1cm]
  L_4^*(t,a) = {} & \frac{1}{2}\left(a\,\sqrt{t^2-a^2}+t^2\arcsin\frac{a}{t}\right).
\end{align*}
\end{theorem}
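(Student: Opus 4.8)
The plan is to derive $g(t)$ from the classical relation between the density of the distance between two random points in a convex body $\mathcal{K}$ and its chord length distribution function. Specifically, I would start from the well-known formula (see, e.g., \cite{Santalo}, \cite{Mathai}) expressing the density of $t$ in terms of the chord length distribution: if $F$ is the chord length distribution function of $\mathcal{K}$ (normalized by the perimeter $u$ as in \eqref{F}) and $A$ is the area, then
\beqn \label{proofplan1}
  g(t) = \frac{2t}{A}\left(\pi - \frac{u}{A}\int_0^t F(s)\,\dd s + \text{(boundary terms)}\right),
\eeqn
or more precisely the version in which $g(t)\,\dd t$ is obtained by differentiating the probability that two random points are at distance $\le t$, which in turn is computed by integrating chords weighted appropriately. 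The cleanest route is via the identity $g(t) = \frac{2t}{A}\bigl[\pi + \frac{u}{A}(\phi^*(t) - t)\bigr]$ where $\phi^*$ is essentially an antiderivative of $F$; thus the core task is to show $\phi^*(t) = \int_0^t \bigl(1 - F(s)\bigr)\,\dd s$ up to the normalization already fixed, equivalently that $\frac{\dd}{\dd t}\phi^*(t) = 1 - F(t)$ on each interval $[\ell_k, \ell_{k+1})$.

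First I would recall or cite the integral-geometric formula (the "Piefke formula" referenced in the keywords, together with the moment relations in \cite[pp.~46/47]{Santalo}) that links the point-distance density to the chord length distribution. Second, I would observe that $H_k^*(t)$ is constructed precisely so that $\frac{\dd}{\dd t}H_k^*(t) = H_k(t) = F(t)$ on $[\ell_k,\ell_{k+1})$: this is a matter of checking that each $L_i^*(t,a)$ is an antiderivative of the corresponding $L_i(t,a)$ from Theorem~\ref{Theorem}. Indeed $\frac{\dd}{\dd t}L_1^* = t = L_1$, $\frac{\dd}{\dd t}L_2^* = 1/t = L_2$, $\frac{\dd}{\dd t}L_3^* = \sqrt{t^2-a^2}/t \cdot (\text{check}) $... — one verifies $\frac{\dd}{\dd t}\bigl(\sqrt{t^2-a^2}+a\arcsin\tfrac{a}{t}\bigr) = \frac{t}{\sqrt{t^2-a^2}} - \frac{a^2}{t\sqrt{t^2-a^2}} = \frac{\sqrt{t^2-a^2}}{t} = L_3$, and similarly $\frac{\dd}{\dd t}L_4^* = \frac12\bigl(\frac{at}{\sqrt{t^2-a^2}} + 2t\arcsin\tfrac{a}{t} - \frac{at}{\sqrt{t^2-a^2}}\bigr) = t\arcsin\tfrac{a}{t} = L_4$. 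Since the $\Theta_i(k,a,b)$ are constants in $t$, this gives $\frac{\dd}{\dd t}h_k^*(t,a,b) = h_k(t,a,b)$, hence $\frac{\dd}{\dd t}H_k^*(t) = H_k(t)$, the extra constant terms in the odd-$n$, $t\ge\lambda$ branch being chosen exactly to make $H_k^*$ continuous across $t=\lambda$ (and the telescoping sum in $\phi^*$ enforcing continuity across each $\ell_k$, using $F$'s continuity from Sulanke's result). Therefore $\phi^*(t) = \int_0^t F(s)\,\dd s$, and substituting into the point-distance formula yields the claimed $g(t)$, with the support $[0,\ell_{K+1})$ following from $F(s)=1$ for $s\ge\ell_{K+1}$ and $F(s)=0$ for $s<0$.

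The main obstacle I anticipate is not the differentiation bookkeeping but pinning down the precise form of the integral-geometric identity and its normalization — i.e., justifying \eqref{proofplan1} with the exact constants $\frac{2t}{A}$, $\pi$, and $\frac{u}{A}$ appearing in the statement. This requires the formula (essentially due to the kinematic measure decomposition) that the probability density of the distance between two i.i.d.\ uniform points equals $\frac{2t}{A^2}$ times the measure of chords of length exceeding $t$ integrated suitably, or equivalently $g(t) = \frac{2\pi t}{A} - \frac{2 u t}{A^2}\int_0^t F(s)\,\dd s$ after an integration by parts; matching this against $\frac{2t}{A}[\pi + \frac{u}{A}(\phi^*(t)-t)]$ forces $\phi^*(t) = t - \int_0^t F(s)\,\dd s = \int_0^t(1-F(s))\,\dd s$, so I would need to recheck the sign convention in the definition of $\phi^*$ versus $H_k^*$. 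Once the normalization is fixed, the remainder is the routine verification above that the starred primitives $L_i^*$, $h_k^*$, $H_k^*$ integrate the unstarred quantities of Theorem~\ref{Theorem}, together with continuity at the breakpoints $\ell_k$ and $\lambda$; the vanishing of $g$ outside $[0,\ell_{K+1})$ is then immediate.
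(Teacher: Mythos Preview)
Your proposal is essentially the paper's own proof: start from Piefke's formula $g(t)=\frac{2ut}{A^2}\int_t^{\ell_{K+1}}(s-t)f(s)\,\dd s$, use the moment identity $\int_0^{\ell_{K+1}} s f(s)\,\dd s=\pi A/u$ together with integration by parts to arrive at $g(t)=\frac{2t}{A}\bigl[\pi+\frac{u}{A}(\phi^*(t)-t)\bigr]$ with $\phi^*(t)=\int_0^t F(s)\,\dd s$, and then verify (exactly as you do) that each $L_i^*$ is an antiderivative of $L_i$ and that the extra constants in the odd-$n$, $t\ge\lambda$ branch make $H_K^*$ continuous at $\lambda$. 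Your sign uncertainty is resolved in favor of $\phi^*(t)=\int_0^t F(s)\,\dd s$ (not $\int_0^t(1-F)$): the $-t$ in $\phi^*(t)-t$ comes from the term $-t\int_t^{\ell_{K+1}} f(s)\,\dd s=-t(1-F(t))$, which after combining with $-\int_0^t s f(s)\,\dd s=-tF(t)+\int_0^t F(s)\,\dd s$ leaves precisely $\int_0^t F(s)\,\dd s - t$.
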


\begin{proof}
According to \cite[p. 130]{Piefke}, the density function of the distance is given by
\beq
  g(t) = \frac{2ut}{A^2}\,\int_t^{\ell_{K+1}}(s-t)f(s)\,\dd s\,,
\eeq
where $f$ is the density function of the chord length. From integral geometry it is well-known that
\beq
  \int_0^{\ell_{K+1}}sf(s)\,\dd s = \frac{\pi A}{u}
\eeq
(see \cite[p.\ 47]{Santalo}, \cite[p.\ 94]{Mathai}), hence
\beq
  \int_t^{\ell_{K+1}}sf(s)\,\dd s = \frac{\pi A}{u}-\int_0^t sf(s)\,\dd s\,.
\eeq
Using integration by parts, we have
\beq
  \int_0^t sf(s)\,\dd s = sF(s)\Big|_0^t-\int_0^t F(s)\,\dd s = tF(t)-\int_0^t F(s)\,\dd s\,.
\eeq
Therefore, we obtain
\begin{align*}
  g(t)
	= {} & \frac{2t}{A}\left[\pi-\frac{u}{A}\left(t-\int_0^t F(s)\,\dd s\right)\right]
	= \frac{2t}{A}\left[\pi+\frac{u}{A}\left(\phi^*(t)-t\right)\right]
\end{align*}
with
\beq
  \phi^*(t) := \int_0^t F(s)\,\dd s\,.
\eeq
This yields for $\ell_k\leq t<\ell_{k+1}$, $k=0,\ldots,K$, 
\beq
  \phi^*(t) = \sum_{\nu=0}^{k-1}\,\int_{\ell_\nu}^{\ell_{\nu+1}}H_\nu(s)\,\dd s
				+ \int_{\ell_k}^t H_k(s)\,\dd s\\
\eeq
(in the case $k=0$, the sum is empty). With
\beq
  H_k^*(t) := \int H_k(t)\,\dd t \quad\mbox{und}\quad J_k^*(s,t) = H_k^*(t)-H_k^*(s) 
\eeq
it follows that
\begin{align*}
  \phi^*(t)
	= {} & \sum_{\nu=0}^{k-1}\left[H_\nu^*(\ell_{\nu+1})-H_\nu^*(\ell_\nu)\right]
			+ H_k^*(t)-H_k^*(\ell_k)\\
	= {} & \sum_{\nu=0}^{k-1}J_\nu^*(\ell_\nu,\ell_{\nu+1}) + J_k^*(\ell_k,t)\,.
\end{align*}
Furthermore, if $k\not=0$ and $t>0$,
\begin{align*}
  h_k^*(t,a,b)
	:= {} & \int h_k(t,a,b)\,\dd t = \sum_{i=1}^4\Theta_i(k,a,b)\int L_i(t,a)\,\dd t\\
	 = {} & \sum_{i=1}^4\Theta_i(k,a,b)\,L_i^*(t,a)
\end{align*}
with the indefinite integrals
\begin{align*}
  L_1^*(t,a) = {} & \int t\:\dd t = \frac{t^2}{2} \;,\quad L_2^*(t,a)=\int\frac{1}{t}\:\dd t = \ln t\,,\\
  L_3^*(t,a) = {} & \int\frac{\sqrt{t^2-a^2}}{t}\:\dd t = \sqrt{t^2-a^2}+a\arcsin\frac{a}{t}
\end{align*}
(see \cite[p.\ 48, Eq.\ 217]{Bronstein}) and, using integration by parts,
\begin{align*}
  L_4^*(t,a) = {} & \int L_4(t,a)\,\dd t = \int t\arcsin\frac{a}{t}\;\dd t\\
			 = {} & \frac{1}{2}\left(t^2\arcsin\frac{a}{t}+a\int\frac{t}{\sqrt{t^2-a^2}}\;\dd t\right)\\
			 = {} & \frac{1}{2}\left(t^2\arcsin\frac{a}{t}+a\,\sqrt{t^2-a^2}\right)\,.
\end{align*}

For odd $n$, the function
\beq
  H_K^*(t) = \left\{
  \begin{array}{lcl}
	H_{K,\,1}^*(t) & \mbox{if} & t<\lambda\,,\\[0.2cm]
	H_{K,\,2}^*(t) & \mbox{if} & t\geq\lambda
  \end{array}\right.
\eeq
with
\begin{align*}
  H_{K,\,1}^*(t) := {} & t-h_K^*(t,A_1(k),B_1(k))+h_{K+1}^*(t,A_1(K),B_1(K)+\pi/n)\,,\displaybreak[0]\\[0.1cm]
  H_{K,\,2}^*(t) := {} & t-h_K^*(t,A_1(K),B_1(K))+h_{K+1}^*(t,A_1(K),B_1(K)+\pi/n)\\
					   & -h_{K+1}^*(t,A_2,B_2+\pi/n)-h_{K+1}^*(t,A_2,B_2)
\end{align*}
is not continuous in $t=\lambda$. This causes a false result when calculating the integral
\beq
  J_K^*(\ell_K,\lambda) = H_K^*(\lambda) - H_K^*(\ell_K)\,.
\eeq
In order to avoid this problem (and unnecessary case distinctions), we define
\begin{align*}
  \widetilde{H}_{K,\,2}^*(t)
	= {} & H_{K,\,2}^*(t)-H_{K,\,2}^*(\lambda)+H_{K,\,1}^*(\lambda)\\[0.1cm]
	= {} & H_{K,\,2}^*(t)+h_{K+1}^*(\lambda,A_2,B_2+\pi/n)+h_{K+1}^*(\lambda,A_2,B_2)
\end{align*}
and put
\beq
  H_K^*(t) := \left\{
  \begin{array}{lcl}
	H_{K,\,1}^*(t) & \mbox{if} & t<\lambda\,,\\[0.2cm]
	\widetilde{H}_{K,\,2}^*(t) & \mbox{if} & t\geq\lambda
  \end{array}\right.
\eeq
so that $H_K^*$ is now a continuous function. This completes the proof. 
\end{proof}

\begin{corollary}
The distribution function $G$ of the distance $t$ between two random points in $\Po$ is given by
\beq
  G(t) = \left\{\begin{array}{lcl}
	0 & \mbox{if} & -\infty<s<0\,,\\[0.15cm]
	\dfrac{1}{A}\left[t^2\left(\pi-\dfrac{2u}{3A}\,t\right)+\dfrac{2u}{A}\,\phi^\n(t)\right] & 
		\mbox{if} & 0\leq t<\ell_{K+1}\,,\\[0.4cm]
	1 & \mbox{if} & t\geq \ell_{K+1}
\end{array}\right.
\eeq
with
\beq
  \phi^\n(t) = \sum_{\nu=0}^{k-1}K_\nu(\ell_{\nu+1})+K_k(t)
	\quad\mbox{if}\quad \ell_k\leq t<\ell_{k+1},\; k=0,\ldots,K\,,
\eeq
where
\beq
  K_k(t) = \frac{1}{2}\,\left(t^2-\ell_k^2\right)\left(\sum_{\nu=0}^{k-1}J_\nu^*(\ell_\nu,\ell_{\nu+1})-H_k^*(\ell_k)\right)
			+J_k^\n(\ell_k,t)
\eeq
with $J_k^*$ and $H_k^*$ according to Theorem \ref{Theorem2} and
\beq
  J_k^\n(s,t) = H_k^\n(t)-H_k^\n(s)\,,
\eeq
where
\beq
  H_k^\n(t) = \left\{
  \begin{array}{l}
	\dfrac{t^3}{3}-h_k^\n(t,A_1(k),B_1(k))+h_{k+1}^\n(t,A_1(k),B_1(k)+\pi/n)\\[0.25cm]
	\quad\mbox{if \, $(n$ is even $\,\wedge\,$ $k\in\{0,\ldots,K-1\})$ $\,\vee\,$
	$(n$ is odd $\,\wedge\,$ $t<\lambda)$}\,,\\[0.4cm]
	\dfrac{t^3}{3}-h_K^\n(t,A_1(K),B_1(K))+h_{K+1}^\n(t,A_1(K),B_1(K)+\pi/n)\\[0.3cm]
	-\:h_{K+1}^\n(t,A_2,B_2+\pi/n)-h_{K+1}^\n(t,A_2,B_2)\\[0.15cm]
	+\,\dfrac{t^2}{2}\left[h_{K+1}^*(\lambda,A_2,B_2+\pi/n)+h_{K+1}^*(\lambda,A_2,B_2)\right]\\[0.3cm]
	\quad\mbox{if \, $n$ is odd $\,\wedge\,$ $t\geq\lambda$}\,,\\[0.4cm]	
	\dfrac{t^3}{3}-h_K^\n(t,A_1(K),B_1(K))\mbox{ \; if \; $n$ is even $\,\wedge\,$ $k=K$}
  \end{array}\right.
\eeq
with $h_{K+1}^*$ from Theorem \ref{Theorem2}, and
\beq
  h_k^\n(t,a,b) = \left\{\begin{array}{l}
	0 \quad\mbox{if}\quad k=0 \,\vee\, (k\not=0\wedge t=0)\,,\\[0.1cm]
	\displaystyle{\sum_{i=1}^4\Theta_i(k,a,b)\,L_i^\n(t,a)} \quad\mbox{if}\quad k\not=0\,\wedge\,t>0\,,
  \end{array}\right.
\eeq
\vspace{-0.4cm}
\begin{align*}
  L_1^\n(t,a) = {} & \frac{t^4}{8} \;,\;\; L_2^\n(t,a) = \frac{t^2}{4}\,(2\ln t-1)\;,\\[0.1cm] 
  L_3^\n(t,a) = {} & \frac{1}{3}\left(t^2-a^2\right)^{3/2}+\frac{a}{2}\left(a\sqrt{t^2-a^2}
						+t^2\,\arcsin\frac{a}{t}\right),\\[0.1cm]
  L_4^\n(t,a) = {} & \frac{1}{8}\left[\frac{5a}{3}\left(t^2-a^2\right)^{3/2}
						+a^3\,\sqrt{t^2-a^2}+t^4\arcsin\frac{a}{t}\right].
\end{align*}
\end{corollary}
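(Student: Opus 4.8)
The plan is to obtain $G$ by integrating the density $g$ of Theorem~\ref{Theorem2} once more: $G(t)=\int_0^t g(\tau)\,\dd\tau$ for $0\le t<\ell_{K+1}$, with $G\equiv 0$ to the left and $G\equiv 1$ to the right of this interval. Since $F$ is continuous (the remark after Theorem~\ref{Theorem}), so are $\phi^*$ and $g$, and this $G$ is well defined. Substituting $g(\tau)=\frac{2\tau}{A}\big[\pi+\frac uA\big(\phi^*(\tau)-\tau\big)\big]$ and integrating term by term,
\beq
  G(t) = \frac1A\bigg[\pi t^2 - \frac{2u}{3A}\,t^3 + \frac{2u}{A}\int_0^t\tau\,\phi^*(\tau)\,\dd\tau\bigg],
\eeq
so it remains to identify $\phi^\n(t):=\int_0^t\tau\,\phi^*(\tau)\,\dd\tau$ with the stated expression.

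To do this I would localise. For $\ell_k\le t<\ell_{k+1}$ write $\int_0^t=\sum_{\nu=0}^{k-1}\int_{\ell_\nu}^{\ell_{\nu+1}}+\int_{\ell_k}^t$. On each $[\ell_\nu,\ell_{\nu+1})$ Theorem~\ref{Theorem2} gives $\phi^*(\tau)=c_\nu+H_\nu^*(\tau)$ with the constant $c_\nu:=\sum_{\mu=0}^{\nu-1}J_\mu^*(\ell_\mu,\ell_{\mu+1})-H_\nu^*(\ell_\nu)$; hence, putting $H_\nu^\n(t):=\int\tau H_\nu^*(\tau)\,\dd\tau$,
\beq
  \int_{\ell_\nu}^t\tau\,\phi^*(\tau)\,\dd\tau = \tfrac12\big(t^2-\ell_\nu^2\big)c_\nu + H_\nu^\n(t)-H_\nu^\n(\ell_\nu) = K_\nu(t),
\eeq
which is exactly the claimed $K_\nu$; since $K_\nu(\ell_\nu)=0$, summation gives $\phi^\n(t)=\sum_{\nu=0}^{k-1}K_\nu(\ell_{\nu+1})+K_k(t)$. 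To pin down $H_k^\n$ one integrates $\tau H_k^*(\tau)$ piece by piece: the leading $\tau$ in $H_k^*$ contributes $\tfrac{t^3}{3}$, while each $h_k^*(\tau,a,b)=\sum_{i=1}^4\Theta_i(k,a,b)L_i^*(\tau,a)$ contributes $\sum_i\Theta_i(k,a,b)L_i^\n(\tau,a)=:h_k^\n(\tau,a,b)$ with $L_i^\n(t,a):=\int tL_i^*(t,a)\,\dd t$.

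The only genuine computation is the four antiderivatives $L_i^\n$. Here $L_1^\n=\int t\cdot\tfrac{t^2}{2}\,\dd t=\tfrac{t^4}{8}$ and $L_2^\n=\int t\ln t\,\dd t=\tfrac{t^2}{4}(2\ln t-1)$ (by parts). For $L_3^\n$ one combines $\int t\sqrt{t^2-a^2}\,\dd t=\tfrac13(t^2-a^2)^{3/2}$ with $\int t\,a\arcsin\tfrac at\,\dd t=a\,L_4^*(t,a)$, and for $L_4^\n=\tfrac12\int t\big(a\sqrt{t^2-a^2}+t^2\arcsin\tfrac at\big)\dd t$ one does a second integration by parts of $\int t^3\arcsin\tfrac at\,\dd t$, using $\tfrac{\dd}{\dd t}\arcsin\tfrac at=-\tfrac a{t\sqrt{t^2-a^2}}$ and $\int\tfrac{t^3}{\sqrt{t^2-a^2}}\,\dd t=\tfrac13(t^2-a^2)^{3/2}+a^2\sqrt{t^2-a^2}$. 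Feeding these into $h_k^\n$ and then into $H_k^\n$ reproduces the stated formula in all the ``generic'' cases ($n$ even; $n$ odd with $t<\lambda$; $k=K$ with $n$ even).

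The delicate point — which I expect to be the main obstacle — is $n$ odd, $k=K$, $t\ge\lambda$, where $H_K^*$ is given by two different formulas on $[\ell_K,\lambda)$ and $[\lambda,\ell_{K+1})$. Because Theorem~\ref{Theorem2} arranged $H_K^*$ to be continuous at $\lambda$, the integrand $\tau H_K^*(\tau)$ is continuous there and its antiderivative $H_K^\n$ must be continuous too; integrating the branch $\widetilde H_{K,2}^*(\tau)=H_{K,2}^*(\tau)+h_{K+1}^*(\lambda,A_2,B_2+\pi/n)+h_{K+1}^*(\lambda,A_2,B_2)$ both yields the extra $\tfrac{t^2}{2}[\,\cdots\,]$ term of $H_K^\n$ (from the $\tau$-independent summand) and forces one to check that the two branches of $H_K^\n$ agree at $t=\lambda$, i.e. that $\sum_{b}\big(h_{K+1}^\n(\lambda,A_2,b)-\tfrac{\lambda^2}{2}h_{K+1}^*(\lambda,A_2,b)\big)=0$ over $b\in\{B_2,B_2+\pi/n\}$. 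Using $A_2=\lambda$ (so $t=a$ there, $\sqrt{t^2-a^2}=0$, $\arcsin\tfrac at=\tfrac\pi2$) together with the special angles $2(K+1)\pi/n=\pi-\pi/n$ and $2b\in\{\pi-\pi/n,\pi+\pi/n\}$, the $\Theta_3$-terms match trivially, the $\Theta_2$-terms cancel in the sum over the two values of $b$, and the $\Theta_1$- and $\Theta_4$-terms cancel against one another; hence $H_K^\n$ is continuous and $J_K^\n(\ell_K,t)=H_K^\n(t)-H_K^\n(\ell_K)$ equals $\int_{\ell_K}^t\tau H_K^*(\tau)\,\dd\tau$. Assembling $\phi^\n$, hence $G$, as above finishes the proof; a final check, $G(\ell_{K+1})=1$, holds because $g$ is a probability density on $[0,\ell_{K+1})$.
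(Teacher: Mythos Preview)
Your argument is correct and follows the same route as the paper's: integrate $g$ to get $G$, split $\phi^\n(t)=\int_0^t\tau\phi^*(\tau)\,\dd\tau$ at the $\ell_\nu$, write $\phi^*$ on each piece as a constant plus $H_\nu^*$, and compute the four antiderivatives $L_i^\n$ by the same elementary integrations (including the same integration-by-parts for $L_4^\n$). The one place you go beyond the paper is in explicitly checking that the two branches of $H_K^\n$ match at $t=\lambda$ for odd $n$; the paper simply records the piecewise antiderivative and moves on, whereas your verification (using $A_2=\lambda$, the sign flips of $\cos b$ and $\sin 2b$ between $b=B_2$ and $b=B_2+\pi/n$, and the relation $2(K+1)\pi/n=\pi-\pi/n$) is a genuine, and correct, addition.
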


\begin{proof}
For $0\leq t<\ell_{K+1}$ one gets
\begin{align*}
  G(t) 
	= {} & \int_0^t g(\tau)\,\dd\tau 
	= \int_0^t\left(\frac{2\pi\tau}{A}-\frac{2u\tau^2}{A^2}+\frac{2u\tau}{A^2}\int_0^\tau F(s)\,\dd s\right)\dd\tau\\
	= {} & \frac{\pi t^2}{A}-\frac{2ut^3}{3A^2}+\frac{2u}{A^2}\int_0^t\tau\left(\int_0^\tau F(s)\,\dd s\right)\dd\tau\\
	= {} & \frac{\pi t^2}{A}-\frac{2ut^3}{3A^2}+\frac{2u}{A^2}\int_0^t\tau\phi^*(\tau)\,\dd\tau
	= \frac{1}{A}\left[t^2\left(\pi-\dfrac{2u}{3A}\,t\right)+\dfrac{2u}{A}\,\phi^\n(t)\right]
\end{align*}
with
\beq
  \phi^\n(t) := \int_0^t s\phi^*(s)\,\dd s\,.
\eeq 
It remains to calculate $\phi^\n(t)$. For $\ell_k\leq t<\ell_{k+1}$, $k=0,\ldots,K$, we have
\begin{align*}
  \phi^\n(t) 
	= {} & \int_0^{\ell_k}s\phi^*(s)\,\dd s + \int_{\ell_k}^t s\phi^*(s)\,\dd s\\
	= {} & \sum_{\nu=0}^{k-1}\,\int_{\ell_\nu}^{\ell_{\nu+1}}s\phi^*(s)\,\dd s
			+\int_{\ell_k}^t s\phi^*(s)\,\dd s
\end{align*}
with
\begin{align*}
  \int_{\ell_k}^t s\phi^*(s)\,\dd s   
	= {} & \int_{\ell_k}^t s\left(\sum_{\nu=0}^{k-1}J_\nu^*(\ell_\nu,\ell_{\nu+1})
			+J_k^*(\ell_k,s)\right)\dd s\\
	= {} & \sum_{\nu=0}^{k-1}J_\nu^*(\ell_\nu,\ell_{\nu+1})\int_{\ell_k}^t s\,\dd s
			+\int_{\ell_k}^t sJ_k^*(\ell_k,s)\,\dd s\\
	= {} & \sum_{\nu=0}^{k-1}J_\nu^*(\ell_\nu,\ell_{\nu+1})\int_{\ell_k}^t s\,\dd s
			+\int_{\ell_k}^t s\left[H_k^*(s)-H_k^*(\ell_k)\right]\dd s\\
	= {} & \left(\sum_{\nu=0}^{k-1}J_\nu^*(\ell_\nu,\ell_{\nu+1})-H_k^*(\ell_k)\right)\,\int_{\ell_k}^t s\,\dd s
			+\int_{\ell_k}^t s H_k^*(s)\,\dd s\,.
\end{align*}
Putting
\beq
  H_k^\n(t) := \int_{\ell_k}^t s H_k^*(s)\,\dd s \quad\mbox{and}\quad J_k^\n(s,t) := H_k^\n(t)-H_k^\n(s)\,,
\eeq
it follows that
\begin{align*}
  \int_{\ell_k}^t s\phi^*(s)\,\dd s   
	= {} & \frac{1}{2}\big(t^2-\ell_k^2\big)\left(\sum_{\nu=0}^{k-1}J_\nu^*(\ell_\nu,\ell_{\nu+1})-H_k^*(\ell_k)\right)
			+H_k^\n(t)-H_k^\n(\ell_k)\\
	= {} & \frac{1}{2}\big(t^2-\ell_k^2\big)\left(\sum_{\nu=0}^{k-1}J_\nu^*(\ell_\nu,\ell_{\nu+1})-H_k^*(\ell_k)\right)
			+J_k^\n(\ell_k,t)=:K_k(t)
\end{align*}
and hence
\beq
  \phi^\n(t) = \sum_{\nu=0}^{k-1}K_\nu(\ell_{\nu+1})+K_k(t) \,,\;\; \ell_k\leq t<\ell_{k+1} \,,\;\; k=0,\ldots,K.
\eeq
If $k\not=0$ and $t>0$, one finds
\begin{align*}
  h_k^\n(t,a,b)
	:= {} & \int th_k^*(t,a,b)\,\dd t = \sum_{i=1}^4\Theta_i(k,a,b)\int tL_i^*(t,a)\,\dd t\\[-0.2cm]
	 = {} & \sum_{i=1}^4\Theta_i(k,a,b)\,L_i^\n(t,a)
\end{align*}
with the indefinite integrals
\beq
  L_1^\n(t,a) = \int\frac{t^3}{2}\:\dd t = \frac{t^4}{8} \;,\quad L_2^\n(t,a)=\int t\ln t\:\dd t 
			  = \frac{t^2}{4}\,(2\ln t-1)
\eeq
and
\begin{align*}
  L_4^\n(t,a)
	= {} & \int tL_4^*(t,a)\,\dd t = \frac{1}{2}\int t\left(a\sqrt{t^2-a^2}+t^2\arcsin\frac{a}{t}\right)\dd t\\
	= {} & \frac{1}{2}\left(a\int t\,\sqrt{t^2-a^2}\;\dd t+\int t^3\arcsin\frac{a}{t}\;\dd t\right)
\end{align*}
with
\beq
  \int t\,\sqrt{t^2-a^2}\;\dd t = \frac{1}{3}\left(t^2-a^2\right)^{3/2} \quad\mbox{\cite[p.\ 47, Eq.\ 214]{Bronstein}}\,.
\eeq
Using integration by parts, we find
\beq
  \int t^3\arcsin\frac{a}{t}\;\dd t
	= \frac{1}{4}\left(t^4\arcsin\frac{a}{t}+a\int\frac{t^2}{\sqrt{1-(a/t)^2}}\;\dd t\right).
\eeq
Since $t\geq a>0$ in the present cases, 
\beq
  \int\frac{t^2}{\sqrt{1-(a/t)^2}}\;\dd t = \int\frac{t^3}{\sqrt{t^2-a^2}}\;\dd t\,, 
\eeq
and
\beq
  \int\frac{t^3}{\sqrt{t^2-a^2}}\;\dd t
	= \frac{1}{3}\left(t^2-a^2\right)^{3/2}+a^2\,\sqrt{t^2-a^2} \quad\mbox{\cite[p.\ 48, Eq.\ 223]{Bronstein}}\,.
\eeq
This yields
\beq
  L_4^\n(t,a) = \frac{1}{8}\left[\frac{5a}{3}\left(t^2-a^2\right)^{3/2}
				+a^3\,\sqrt{t^2-a^2}+t^4\arcsin\frac{a}{t}\right].
\eeq
Furthermore,
\begin{align*}
  L_3^\n(t,a) = {} & \int t\,\sqrt{t^2-a^2}\,\dd t + a\int t\arcsin\frac{a}{t}\,\dd t\\
			  = {} & \frac{1}{3}\left(t^2-a^2\right)^{3/2}+\frac{a}{2}\left(a\sqrt{t^2-a^2}
						+t^2\,\arcsin\frac{a}{t}\right)	
\end{align*}
(see the calculations of $L_4^\n(t,a)$ and $L_4^*(t,a)$).
\end{proof}

\section{Examples}

Fig.\ \ref{Fig_3} shows examples for chord length distribution functions $F$.

As special case of Theorem \ref{Theorem2}, the distance density function for an equilateral triangle $\mathcal{P}_{3,\,r}$ with circumscribed circle of radius $r$ is given by
\beq
  g(t) = \left\{\begin{array}{lll}
	\dfrac{2t}{A}\,\bigg[\pi+\dfrac{u}{A}\,\big(\phi(t)-t\big)\bigg] & \mbox{if} & t\in[0,\sqrt{3}\,r)\,,\\[0.4cm]
	0 & \mbox{if} & t\in\mathbb{R}\setminus [0,\sqrt{3}\,r)
  \end{array}\right.
\eeq
with $u=3\,\sqrt{3}\,r$, $A=\dfrac{3}{4}\,\sqrt{3}\,r^2$ and
\vspace{0.1cm}
\beq
  \phi(t) = \left\{\begin{array}{lll}
	\dfrac{\big(3\,\sqrt{3}+2\pi\big)t^2}{36 r} & \mbox{if} & 0\leq t<\dfrac{3r}{2}\,,\\[0.4cm]
	\dfrac{3}{2}\left[t\,\sqrt{1-\bigg(\dfrac{3r}{2t}\bigg)^2}-\dfrac{\pi r}{2}\right]
	+ \bigg(\dfrac{1}{4\,\sqrt{3}}-\dfrac{\pi}{9}\bigg)\dfrac{t^2}{r}\\[0.6cm]
	\qquad +\;\bigg(\dfrac{3 r}{2}+\dfrac{t^2}{3r}\bigg)\arcsin\dfrac{3r}{2t} & \mbox{if} & 
		\dfrac{3r}{2}\leq t<\sqrt{3}\,r\,.
  \end{array}\right.
\eeq
Fig.\ \ref{Fig_4} shows the function $r\times g(t)$ for $\mathcal{P}_{3,\,r}$ and some other examples.

One finds for the expectation of the distance for $\mathcal{P}_{3,\,r}$:
\begin{align*}
  \mathrm{E}[t]
	= {} & \int_0^{\sqrt{3}\,r}t\,g(t)\,\dd t 
	= \Bigg(\int_0^{3r/2}+\,\int_{3r/2}^{\sqrt{3}\,r}\Bigg)\,t\,g(t)\,\dd t\displaybreak[0]\\
	= {} & \frac{r}{20}\big(27-90\,\sqrt{3}+26\,\sqrt{3}\,\pi\big)
	       + \frac{r}{20}\big(-27+94\,\sqrt{3}-26\,\sqrt{3}\,\pi\\
	     & +\sqrt{3}\,\ln 27\big)
	= \frac{\sqrt{3}\,r}{20}\,\big(4+3\ln 3)\,.
\end{align*}
Since the side length $a$ of $\mathcal{P}_{3,\,r}$ is equal to $\sqrt{3}\,r$, we get
\beq
  \mathrm{E}[t] = \frac{a}{20}\,\big(4+3\ln 3) = \frac{3a}{5}\bigg(\frac{1}{3}+\frac{1}{4}\ln 3\bigg).
\eeq
This is the result from \cite[p.\ 49]{Santalo}.
\begin{figure}[h]
  \vspace{0cm}
  \begin{center}
    \includegraphics[scale=1.05]{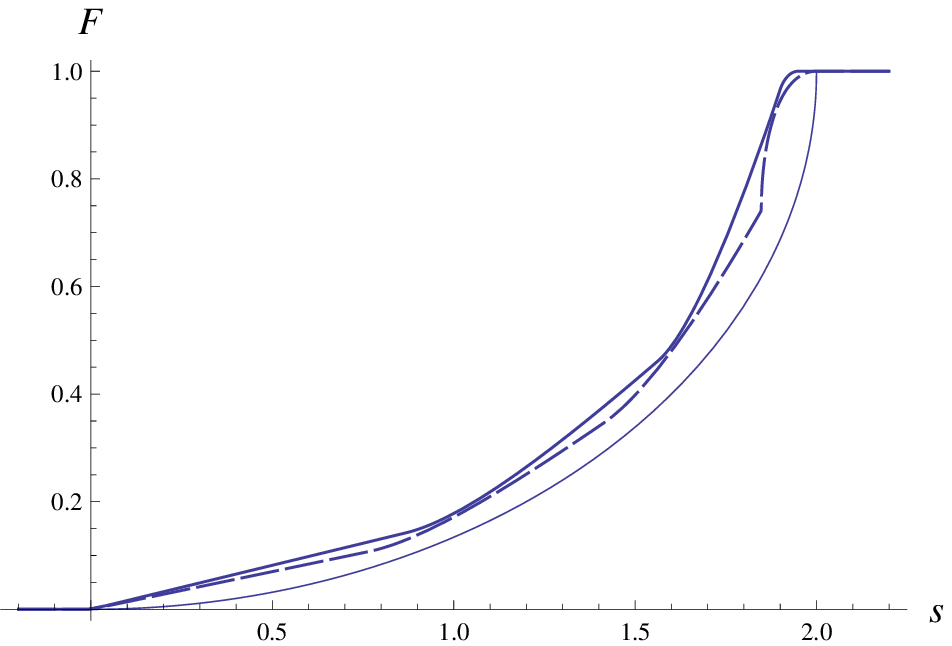}
  \end{center}
  \vspace{-0.67cm}
  \caption{\label{Fig_3} $F$ for $\mathcal{P}_{7,\,1}$ (thick), $\mathcal{P}_{8,\,1}$ (dashed) and circle of radius $r=1$ (thin)}
  \vspace{0.2cm}
  \begin{center}
    \includegraphics[scale=1]{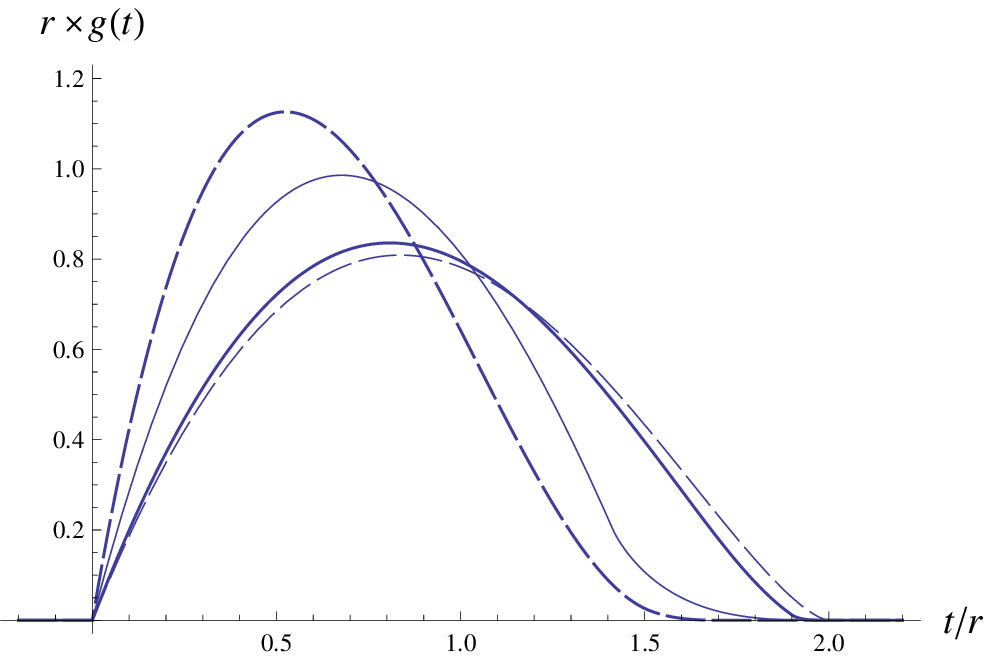}
  \end{center}
  \vspace{-0.67cm}
  \caption{\label{Fig_4} $r\times g(t)$ for $\mathcal{P}_{3,\,r}$ (thick and dashed), $\mathcal{P}_{4,\,r}$ (thin), $\mathcal{P}_{10,\,r}$ (thick) and circle of radius $r$ (thin and dashed)}
  \vspace{-0.5cm}
\end{figure}

\newpage
\vspace{0.2cm}
\noindent
{\large {\bf Acknowledgment}}\\[0.1cm]
I wish to thank Lothar Heinrich (University of Augsburg) for bringing Piefke's paper to my attention.

\vspace{0.4cm}
\begin{center} 
Uwe B\"asel\\[0.2cm] 
HTWK Leipzig, University of Applied Sciences,\\
Faculty of Mechanical and Energy Engineering,\\
PF 30 11 66, 04251 Leipzig, Germany
\end{center}

\begin{thebibliography}{11}

\bibitem{AMS}
Ambartzumjan, R. V.; Mecke, J.; Stoyan, D.:
{\em Geometrische Wahrscheinlichkeiten und Stochastische Geometrie,}
Akademie Verlag, Berlin, 1993.

\bibitem{Baesel_Duma}
B\"asel, U.; Duma, A.:
Verteilungsfunktionen der Sehnenl\"ange eines nichtkonvexen Polygons,
{\em Fernuniversit\"at Hagen: Seminarberichte aus der Fakult\"at f\"ur Mathematik und Informatik,}
\textbf{84} (2011), 141-151.

\bibitem{Borel}
Borel, E.:
{\em Principes et Formules Classiques du Calcul des Probabilit\'es,}
Gauthier-Villars, Paris, 1925. 

\bibitem{Bronstein}
Bronstein, I. N.; Semendjajew, K. A.:
{\em Taschenbuch der Mathematik,}
Verlag Nauka, Moskau und BSB\ B.\ G.\ Teubner Verlagsgesellschaft, Leipzig, 1989.

\bibitem{Czuber}
Czuber, E.:
{\em Wahrscheinlichkeitsrechnung und ihre Anwendung auf Feh\-lerausgleichung, Statistik und Lebensversicherung,} 1. Band,
B.~G.~Teubner, Leipzig und Berlin, 1914.

\bibitem{Gates}
Gates, J.:
Some properties of chord length distributions,
{\em J. Appl. Prob.,} {\bf 24} (1987), 863-874.

\bibitem{Ghosh}
Ghosh, B.:
Random distances within a rectangle and between two rectangles,
{\em Bull. Calcutta Math. Soc.,} {\bf 43} (1951), 17-24.  

\bibitem{Gille}
Gille, W.:
The chord length distribution of parallelepipeds with their limiting cases,
{\em Exp. Techn. Phys.,} \textbf{36} (1988), 197-208.

\bibitem{HO}
Harutyunyan, H. S.; Ohanyan, V. K.:
The chord length distribution function for regular polygons,
{\em Adv. Appl. Prob. (SGSA),} \textbf{41} (2009), 358-366.

\bibitem{Heinrich}
Heinrich, L.:
On Lower Bounds of Second-Order Chord Power Integrals of Convex Discs,
{\em Rend. Circ. Mat. Palermo,} Serie II, Suppl., {\bf 81} (2009), 213-222. 

\bibitem{Marsaglia}
Marsaglia, G.; Narasimhan, B. G.; Zaman, A.:
The distance between random points in rectangles,
{\em Commun. Statist. - Theory Meth.,} \textbf{19(11)} (1990), 4199-4212.

\bibitem{Mathai}
Mathai, A. M.: 
{\em An Introduction to Geometrical Probability,}
Gordon and Breach, Australia, 1999.

\bibitem{MMP}
Mathai, A. M.; Moschopoulos, P.; Pederzoli, G.:
Random points associated with rectangles,
{\em Rend. Circ. Mat. Palermo,} Serie II, Tomo XLVIII (1999), 163-190.

\bibitem{Piefke}
Piefke, F.:
Beziehungen zwischen der Sehnenl\"angenverteilung und der Verteilung des Abstandes zweier zuf\"alliger Punkte im Eik\"orper,
{\em Z.~Wahrscheinlichkeitstheorie verw. Gebiete,} \textbf{43} (1978), 129-134. 

\bibitem{Santalo}
Santal\'o, L.\  A.:
{\em Integral Geometry and Geometric Probability,}
Addison-Wesley, London, 1976.

\bibitem{Sulanke}
Sulanke, R.
Die Verteilung der Sehnenl\"angen an ebenen und r\"aumlichen Figuren,
{\em Math. Nachr.} \textbf{23} (1961), 51-74.

\end{thebibliography}
\end{document}